\def\AA{C_1}
\def\BB{C_2}
\def\CC{C_3}
\def\DD{C_4}
\def\EE{C_5}
\def\FF{C_6}
\def\GG{C_7}
\def\HH{C_8}
\def\II{C_9}
\def\JJ{C_{10}}
\def\KK{C_{11}}
\def\LL{C_{12}}
\newtheorem{theorem}{Theorem}
\newtheorem{lemma}[theorem]{Lemma}
\theoremstyle{definition}
\newtheorem{example}[theorem]{Example}
\def\residue{{\rm{Res}}}
\def\acts{\hspace{-1pt}\mbox{\raisebox{1.3pt}{\text{\huge{.}}}}\hspace{-1pt}}
\def\fix{\mathsf{F}}
\def\orbit{\mathsf{O}}
\def\mertens{\mathcal{M}}
\def\submertens{\mathcal{N}}
\def\eul{{\rm{e}}}
\def\bigo{{\rm{O}}}
\def\littleo{{\rm{o}}}
\def\({\left(}
\def\){\right)}
\def\imag{{\rm{i}}}
\def\dee{{\,\rm{d}}}
\def\lf{{\rm L}_f}
\DeclareMathOperator\girth{girth}
\renewcommand{\le}{\leqslant}
\renewcommand{\ge}{\geqslant}
\begin{document}

\title[A dichotomy in orbit-growth]{A dichotomy in orbit-growth for commuting
automorphisms}
\author{Richard Miles}
\address{School of Mathematics, KTH, SE-100 44,
Stockholm, Sweden}
\email{r.miles@uea.ac.uk}
\author{Thomas Ward}
\address{School of Mathematics, University of
East Anglia, Norwich NR4 7TJ, England}
\email{t.ward@uea.ac.uk}
\thanks{This research was supported by grant KAW 2005.0098 from the Knut and
Alice Wallenberg Foundation and by grant
4806 from the London Mathematical Society.}
\subjclass{22D40, 37A15}

\begin{abstract}
We consider asymptotic orbit-counting problems for certain
expansive actions by commuting automorphisms of compact groups. A
dichotomy is found between systems with asymptotically more
periodic
orbits than the topological entropy predicts, and those
for which there is no excess of periodic orbits.
\end{abstract}

\maketitle

Let~$G$ be a countable group acting on some set~$X$, with the
action written~$x\mapsto g\acts x$. Let~$\mathcal L=\mathcal
L(G)$ denote the poset of finite index subgroups of~$G$, and
write~$a_n(G)=\vert\{L\in\mathcal{L}\mid[G:L]=n\}\vert$.
We assume that~$\mathcal L$ is locally finite (a finiteness assumption
on~$G$, guaranteed if~$G$ is finitely generated).
For~$L\in\mathcal{L}$,
the set of~$L$-periodic points in~$X$ under the action
is
\[
\fix(L)=\{x\in X\mid g\acts x=x\mbox{ for all }g\in L\}.
\]
An~$L$-periodic orbit~$\tau$ is the orbit of a point with
stabilizer~$L$, and the length of the orbit is
denoted~$[L]=[G:L]$, the index of~$L$ in~$G$. We always assume that
there are only finitely many orbits of length~$n$ for each~$n\ge1$
(a finiteness assumption on the action, guaranteed
if the action is expansive). The number of~$L$-periodic
orbits is
\[
\orbit(L)=\frac{1}{[L]}\left\vert
\{
x\in X\mid g\acts x=x\Longleftrightarrow g\in L
\}
\right\vert
\]
Orbit growth may be studied via the asymptotic behaviour of the
orbit-counting function
\[
\pi(N)=\sum_{[L]\le N}\orbit(L).
\]
Our focus is on actions with an
exponential rate of orbit growth~$g>0$, and for these it is also natural to consider
the weighted sum
\[
\mertens(N)=\sum_{[L]\le N}\frac{\orbit(L)}{\eul^{g[L]}}.
\]

Any~$L$-periodic point lives on a unique~$L'$-periodic orbit
for some subgroup~$L'\ge L$, so
\begin{equation}\label{hellodarknessmyoldfriend}
\fix(L)=\sum_{L'\ge L}[L']\orbit(L')
\end{equation}
and therefore
\begin{equation}\label{ivecometotalkwithyouagain}
\orbit(L)=\frac{1}{[L]}\sum_{L'\ge L}\mu(L',L)\fix(L'),
\end{equation}
where~$\mu$ is the M{\"o}bius function on the incidence algebra
of~$\mathcal L$ (the equivalence of~\eqref{hellodarknessmyoldfriend}
and~\eqref{ivecometotalkwithyouagain} for all functions~$\fix:\mathcal L\to
\mathbb N_0$ defines the function~$\mu$ by induction).

\begin{example}
The familiar setting for dynamical systems has~$G=\mathbb Z$,
where the action is generated by the
transformation~$x\mapsto 1\acts x$. If there are
parameters~$h>h'>0$ with
\[
\fix(n\mathbb Z)=\eul^{hn}+\bigo(\eul^{h'n}),
\]
then it is easy to check that
\[
\pi(N)\sim\frac{\eul^{h(N+1)}}{N}
\]
and
\[
\mertens(N)=\log N+\AA+\bigo(1/N),
\]
with~$g=h$. Asymptotics of this shape arise in hyperbolic dynamical
systems (see Parry and Pollicott~\cite{MR727704} and
Sharp~\cite{MR1139566}), and in combinatorics (see
Pakapongpun and the second author~\cite{pakapongpun}). Natural examples
with slower growth rates are studied
in~\cite{MR1461206},~\cite{MR2339472},~\cite{MR2180241}.
For example, in~\cite{MR2339472} it is shown that for certain algebraic
dynamical systems of finite combinatorial rank the asymptotic growth
rate takes the form
\[
\pi(N)\sim N^{\sigma}(\log N)^{\kappa}
\]
for some~$\sigma,\kappa\ge0$. In all these cases the growth
comes entirely from the action, because there is no
growth in the group:~$a_n(\mathbb Z)=1$ for all~$n\ge1$,
and~$\vert\mu(L,L')\vert\le 1$ for
all~$L,L'\in\mathcal L(\mathbb Z)$.
\end{example}

\begin{example}\label{becauseavisionsoftlycreeping}
Let~$G$ be a finitely generated nilpotent group
and~$B$ a finite alphabet. The \emph{full~$G$-shift} on~$b=|B|$ symbols
is the~$G$-action on~$B^G$ given by~$(g\acts x)_h=x_{gh}$,
where~$x=(x_h)\in B^G$. For this action
\[
\fix(L)=b^{[L]}
\]
for all~$L\in\mathcal L(G)$ and there is a characterisitic exponential growth of~$\log b$.
We showed
in~\cite{MR2465676} that there are
constants~$\BB>0$,~$\alpha\in\mathbb Q_{\ge0}$
and~$\beta\in\mathbb N_{0}$ for which
\[
\mertens(N)\sim\BB N^{\alpha}\left(\log N\right)^{\beta}.
\]
For~$G=\mathbb Z^d$,~$d\ge2$, there are
constants~$\CC,\DD,\EE>0$ such that
\[
\CC\le\frac{\pi(N)}{N^{d-2}b^{N}}\le\DD(\log N)^{d-1}
\]
and
\[
\mertens(N)\sim\EE N^{d-1}.
\]
In these examples there is exponential growth due to the
action and some growth from the group: in this
setting both~$a_n$ and~$\mu$ are unbounded functions.
\end{example}

In this paper we start to bridge the gap between these
two examples, by considering some actions of~$\mathbb Z^2$
less trivial than the full shift. It is hoped that,
for example, asymptotics for any expansive~$\mathbb Z^d$-action by
automorphisms of a compact group may be found, but the simple
case considered here already throws up new phenomena.

\section{Actions defined by polynomials}

Fix a polynomial~$f\in\mathbb Z[x^{\pm1},y^{\pm1}]$,
written~$f(x,y)=\sum c_{(a,b)}x^ay^b$
for some finitely-supported function~$c:\mathbb Z^2\to\mathbb Z$.
Associate to~$f$
a compact abelian group
\[
X_f=\{x\in\mathbb T^{\mathbb Z^2}\mid\sum c_{(a,b)}x_{(a+m,b+n)}=0\pmod{1}
\mbox{ for all }m,n\in\mathbb Z\},
\]
with the~$\mathbb Z^2$-action defined by the shift,
\[
\left((m,n)\acts x\right)_{(k,\ell)}=x_{(m+k,n+\ell)}.
\]
Assume that~$f(\eul^{2\pi\imag s},\eul^{2\pi\imag t})\neq0$ for
all~$(s,t)\in\mathbb T^2$; by Schmidt~\cite{MR1069512} this is
equivalent to the action being \emph{expansive} with respect to the
natural topology on~$X_f$ inherited from that of~$\mathbb
T^{\mathbb Z^2}=X_{0}$
(that is, there is a neighbourhood~$U$ of~$0\in X_f$ such
that~$\bigcap_{\mathbf(m,n)\in\mathbb{Z}^2}(m,n)\acts U=
\{0\}$).

If~$f(x,y)=b\in\mathbb N$ is a constant, then~$X_f$ is the
full~$\mathbb Z^2$-shift on~$b$ symbols as in
Example~\ref{becauseavisionsoftlycreeping}.
In a wider context, the connection between
algebraic~$G$-actions and polynomials (or ideals)
in the integral group ring~$\mathbb Z[G]$ plays a
central role in algebraic dynamics. An overview
of this theory may be found in Schmidt's
monograph~\cite{MR1345152}, and some recent
developments for groups larger than~$\mathbb Z^d$
include work of the first author~\cite{MR2216558},
of Einsiedler and Rindler~\cite{MR1849144}, and
of Deninger and Schmidt~\cite{MR2322178}.

For brevity we write
\[
\lf(s,t)=\log\vert f(\eul^{2\pi\imag s},\eul^{2\pi\imag t})\vert,
\]
which is a continuous function on~$\mathbb T^2=
\mathbb R^2/\mathbb Z^2$ under
the standing assumption of expansiveness.
Following Lind~\cite{MR1411232}, let~$\mathcal C$ denote the
set of compact subgroups of~$\mathbb T^2$, and define a
map~$\mathsf{m}:\mathcal{C}\to\mathbb R$, continuous in the
Hausdorff metric on~$\mathcal C$, by
\[
\mathsf{m}(K)=
\int_{K}\lf(s,t)\dee m_K(s,t)
\]
where the integration is with respect to Haar measure~$m_K$.
In particular, if~$K$ is a finite subgroup, then~$\mathsf{m}(K)=
\frac{1}{\vert K\vert}\sum_{(s,t)\in K}\lf(s,t).$

By Lind, Schmidt and the second author~\cite{MR1062797},
the topological entropy
of the action is given by
\[
h=\mathsf{m}(\mathbb T^2),
\]
the Mahler measure of~$f$. The growth in periodic points is also studied
in~\cite{MR1062797}, and in particular it is shown that
\[
\lim_{\girth(L)\to\infty}\frac{1}{[L]}\log\fix(L)=h,
\]
where~$\girth(L)=\min\{\Vert(a,b)\Vert\mid(a,b)\in L\setminus\{(0,0)\}\}$. The upper growth
rate is found by Lind~\cite{MR1411232},
\begin{equation}\label{leftitsseedswhileIwassleeping}
g=\lim_{N\to\infty}\sup_{[L]\ge N}\frac{1}{[L]}\log\fix(L)=
\sup_{C\in\mathcal C_{\infty}}\mathsf{m}(K),
\end{equation}
where~$\mathcal C_{\infty}\subset\mathcal C$ is the set of
infinite compact subgroups of~$\mathbb T^2$.

The dichotomy in the title is this: if~$g>h$, then the action
has favoured directions corresponding to sequences of infinite
subgroups along which there is convergence to~$g$
in~\eqref{leftitsseedswhileIwassleeping} (and along which an
abundance of periodic points are found, in excess of the amount
predicted by the topological entropy, which is a global
invariant for the whole action). If~$g=h$ then
there are no preferred directions. Systems with~$g>h$
as a result behave more like the familiar case~$G=\mathbb Z$, while
systems with~$g=h$ have the potential for orbit growth
asymptotics peculiar to higher rank actions (see
Table~\ref{table} for explicit asymptotics for the
simplest~$d$-dimensional systems for small values of~$d$).

Any~$L\in\mathcal L=\mathcal L(\mathbb Z^2)$ may be written in
the form
\begin{equation*}\label{andthevisionplantedinmybrain}
L=L(a,b,c)=\langle(a,0),(b,c)\rangle,
\end{equation*}
where~$a,c\ge1$,~$0\le b\le a-1$, and~$[L]=ac$
(this is the canonical form for lattices due
originally to Hermite~\cite{041.1126cj}).
Write
\[
L^{\perp}=
\{
\left(\textstyle\frac{j}{a},\frac{k}{c}-\frac{jb}{ac}\right)\mid
0\le j\le a-1,0\le k\le c-1
\}
\]
for the annihilator of~$L$ under the
Pontryagin duality
between~$\mathbb T^2$ and~$\mathbb Z^2$.
By~\cite{MR1062797} we have
\begin{equation*}\label{stillremains}
\fix(L)=
\prod_{(s,t)\in L^{\perp}}\vert f(\eul^{2\pi\imag s},\eul^{2\pi\imag t})\vert=
\eul^{[L]\mathsf{m}(L^{\perp})}.
\end{equation*}

\begin{theorem}\label{whenmyeyeswerestabbedbyaflashofneonlight}
If~$g>h$, then there are constants~$\FF,\GG>0$ such that
\begin{equation}\label{thatsplitthenight}
\FF\log N\le\mertens(N)\le\GG\log N
\end{equation}
and
\begin{equation}\label{andtouchedthesoundsofsilence}
\FF\le\frac{\pi(N)}{\eul^{gN}}\le\GG.
\end{equation}
\end{theorem}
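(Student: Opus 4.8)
The plan is to reduce both inequalities, by M\"obius inversion, to two-sided bounds for weighted sums of the fixed-point counts $\fix(L)=\eul^{[L]\mathsf m(L^\perp)}$, and then to extract these from Lind's formula~\eqref{leftitsseedswhileIwassleeping} together with the real-analyticity of $\lf$. From~\eqref{ivecometotalkwithyouagain} one has on the one hand $\orbit(L)\le\frac1{[L]}\fix(L)$, and on the other
\[
\orbit(L)\ge\frac1{[L]}\Bigl(\fix(L)-\sum_{L'>L}|\mu(L',L)|\,\fix(L')\Bigr),
\]
where $L'$ runs over the subexponentially many proper superlattices of $L$, each with $[L']$ a proper divisor of $[L]$, so $[L']\le[L]/2$. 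Hence $\pi(N)$ is squeezed between $\sum_{[L]\le N}\frac1{[L]}\fix(L)$ and a slightly smaller quantity, and likewise $\mertens(N)$ between $\sum_{[L]\le N}\frac{\fix(L)}{[L]\eul^{g[L]}}$ and a correction, so the whole problem is to understand how close $\mathsf m(L^\perp)$ can be to $g$ and how many lattices of each index come close.

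The structural input is $g>h=\mathsf m(\mathbb T^2)$. The set $\mathcal C$ is compact in the Hausdorff metric, $\mathsf m$ is continuous, and $\mathsf m(K)\to h$ along one-dimensional subgroups of unbounded complexity; hence the supremum in~\eqref{leftitsseedswhileIwassleeping} is attained, at some finite collection $K_1,\dots,K_r$ of one-dimensional subgroups with $\mathsf m(K_i)=g$. A compactness argument (using, for finite subgroups of large order, that they equidistribute in their closures) then produces a $\delta_0>0$ such that for every $L$, \emph{either} $L^\perp$ lies within a fixed small Hausdorff distance of some $K_i$---which, on inspecting the projection to the circle $\mathbb T^2/K_i^{\circ}$ (here $K_i^\circ$ is the identity component of $K_i$), forces $L^\perp\subseteq K_i$, equivalently $\Lambda_i:=K_i^{\perp}\subseteq L$---\emph{or} $\mathsf m(L^\perp)\le g-\delta_0$, the latter with at most finitely many exceptions, of bounded index. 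Call the lattices of the first kind \emph{favoured}. The unfavoured lattices then contribute at most $\sum_n a_n(\mathbb Z^2)\eul^{-\delta_0 n}<\infty$ to $\mertens(N)$ and at most $\sum_{n\le N}a_n(\mathbb Z^2)\eul^{(g-\delta_0)n}=\littleo(\eul^{gN})$ to $\pi(N)$, so they are absorbed into the constants in~\eqref{thatsplitthenight} and~\eqref{andtouchedthesoundsofsilence}.

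For a favoured $L$ with $\Lambda_i\subseteq L$ the annihilator $L^\perp$ is the subgroup of order $[L]$ of the circle $K_i$, that is, $[L]$ equally spaced points; so $\mathsf m(L^\perp)$ is exactly the equispaced $[L]$-point Riemann sum for $\mathsf m(K_i)=g$. Since $f$ does not vanish on $\mathbb T^2$, $\lf$ is real-analytic, hence so is its restriction to the circle $K_i$, and the equispaced Riemann sums of a real-analytic function on a circle converge geometrically (the aliasing error is a tail of geometrically decaying Fourier coefficients); thus $\mathsf m(L^\perp)=g+\bigo(\eul^{-c[L]})$ for some $c>0$, and $\fix(L)=\eul^{g[L]}\bigl(1+\bigo([L]\,\eul^{-c[L]})\bigr)$. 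Moreover every proper superlattice of a favoured lattice is again favoured, with index at most $[L]/2$, so applying the same Riemann-sum bound to those of large index and absorbing the boundedly many of small index shows that the M\"obius remainder in the lower bound for $\orbit(L)$ is $\littleo(\eul^{g[L]}/[L])$. Hence $\orbit(L)=\frac{\eul^{g[L]}}{[L]}\bigl(1+\littleo(1)\bigr)$ for favoured $L$.

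Finally one sums the favoured contributions. For each $i$ the favoured lattices containing $\Lambda_i$ are parametrised by the subgroup lattice of $\mathbb Z^2/\Lambda_i$, so the task is to count, as a function of $n$, the favoured lattices of index $n$ (and to control the overlaps among the $r$ families); the two displayed sums then collapse to elementary arithmetic sums whose orders of magnitude are $\log N$ and $\eul^{gN}$, which is~\eqref{thatsplitthenight} and~\eqref{andtouchedthesoundsofsilence}. I expect this last enumeration---determining, up to constants, how many index-$n$ lattices have $\mathsf m(L^\perp)$ essentially equal to $g$---to be the main obstacle. The real-analyticity of $\lf$ plays the decisive auxiliary role: it is what makes the error terms in the M\"obius inversion, and hence the passage from $\fix$ to $\orbit$, negligible, so that only the crude two-sided count of favoured lattices is needed.
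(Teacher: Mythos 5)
Your plan is in the right spirit and shares the essential skeleton of the paper's argument: isolate the lattices for which $\mathsf m(L^\perp)$ is essentially $g$, show the rest contribute negligibly, use an equidistribution/Riemann-sum estimate to compare $\mathsf m(L^\perp)$ with the appropriate $\mathsf m(K_i)$, and sum the favoured contributions. However, there are a few genuine problems.

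\textbf{The dichotomy mechanism is wrong as stated.} You claim that if $L^\perp$ is Hausdorff-close to $K_i$ then $L^\perp\subseteq K_i$, by projecting to $\mathbb T^2/K_i^\circ$. This fails: with $K_i=J(2)$ the projection of $K_i$ is $\{0,\tfrac12\}\subset\mathbb T$, and the projection of $L(5,b,c)^\perp$ is $\{0,\tfrac15,\tfrac25,\tfrac35,\tfrac45\}$, which lies within Hausdorff distance $\tfrac15$ of $\{0,\tfrac12\}$ but is not contained in it. More importantly, no such implication is needed. Writing $L=L(a,b,c)$ in Hermite form, the annihilator $L^\perp$ is \emph{automatically} contained in both $J(a)$ and $J(b,c)$; what you actually need is that $\mathsf m(L^\perp)$ being within $\lambda/2$ of $g$ for $[L]$ large forces the associated line subgroup $K(L)\in\{J(a),J(b,c)\}$ (the one with more equispaced points per component) to satisfy $\mathsf m(K(L))=g$. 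That is exactly the combination of the paper's Lemma~\ref{withinthesoundofsilence} with the gap~\eqref{neaththehaloofastreetlamp}, using $\max\{a,c\}\ge\sqrt{n}$. The Hausdorff-distance argument should be replaced by this.

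\textbf{Your ``favoured'' set is too large, and your Riemann-sum claim is imprecise.} You characterise the favoured $L$ by $\Lambda_i\subseteq L$, which for $K_i=J(a_0)$ amounts to $a\mid a_0$. But for a proper divisor $a$ of $a_0$ one need not have $\mathsf m(J(a))=g$, so $\mathsf m(L^\perp)$ is near $\mathsf m(J(a))$, not near $g$, and the formula $\mathsf m(L^\perp)=g+\bigo(\eul^{-c[L]})$ is false for those $L$. The correct condition is $a\in A$ (respectively $(b,c)\in B$). Also, $K_i$ is in general a \emph{disconnected} one-dimensional subgroup (a union of circles), and $L^\perp$ is not ``the order-$[L]$ subgroup of the circle $K_i$''; it is $c$ equispaced points on each of $a$ components. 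A Riemann-sum comparison still works component by component, but the constant in any analyticity estimate must be made uniform over the (finitely many) components $j/a$, $a\in A$. In fact real-analyticity is overkill: the paper's bound $|\mathsf m(L^\perp)-\mathsf m(K(L))|\le\HH/\max\{a,c\}$, obtained from bounded variation of $\lf$ along line segments, already gives $|ac(\mathsf m(L^\perp)-g)|\le\HH\min\{a,c\}$, which is bounded once $\min\{a,c\}$ lies in the finite set $A$ (or is bounded by the largest $c$ with $(b,c)\in B$). Exponential decay is not needed for the $\log N$ and $\eul^{gN}$ orders of growth.

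\textbf{The M\"obius-remainder step has a gap.} You control the error in $\orbit(L)$ via the number of superlattices and the sizes of $\fix(L')$, but never bound $|\mu(L',L)|$. In the incidence algebra of $\mathcal L(\mathbb Z^2)$ the M\"obius function is unbounded, and one needs an estimate such as $|\mu(L',L)|\le\eul^{\II(\log[L'])^2}$ (cf.~\cite[Lem.~2]{MR2465676}) together with $a_n(\mathbb Z^2)\le 9n\log n$ to conclude that $\pi_2$ and $\mertens_2$ are bounded. Without a subexponential bound on $|\mu|$ the remainder is not obviously negligible.

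\textbf{The final enumeration is left open --- but it is not actually the hard part.} You flag the count of favoured lattices of index $n$ as ``the main obstacle''. Once one uses the Hermite parametrisation, this step is elementary: for fixed $a\in A$, the favoured lattices $L(a,b,c)$ of index $\le N$ are exactly those with $0\le b\le a-1$ and $a<c\le N/a$, and the weighted sum $\sum_b\sum_c\frac1{ac}\exp\!\bigl(ac(\mathsf m(L^\perp)-g)\bigr)$ is, by the bounded-fluctuation remark above, within constants of $\sum_{c\le N/a}\frac1c=\log N+\bigo(1)$; the same for $(b,c)\in B$ with $a$ varying. The genuine subtlety is not counting, but the decomposition into the four families $\mathcal L_1,\dots,\mathcal L_4$ and showing $\mathcal L_2(n)\cup\mathcal L_3(n)\ne\emptyset$ (so the main term does not vanish), which follows from the supremum in~\eqref{leftitsseedswhileIwassleeping} being attained.

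In short: the high-level plan is sound, but the justification for the favoured/unfavoured split is incorrect, the parametrisation of the favoured lattices is imprecise, the M\"obius bound is missing, and the last summation --- which you worry about --- becomes routine once you work directly with the Hermite form $L(a,b,c)$ as the paper does.
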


\begin{proof}
Just as in~\cite{MR2339472} and~\cite{MR2465676}, part of the proof
involves isolating a main term. However, the more complex
geometry of the acting group and the action requires additional
steps to take account of the preferred directions
with an abundance of periodic orbits.

Associate to~$L(a,b,c)\in\mathcal{L}$ subgroups
\[
J(a)=\{(\textstyle\frac{j}{a},t)\mid t\in\mathbb T,j=0,\dots,a-1\},
\]
\[
J(b,c)=\{(t,\textstyle\frac{k}{c}-\frac{bt}{c})\mid t\in\mathbb T,k=0,\dots,c-1\} \subset\mathcal C_\infty,
\]
and set
\[
K(L)=\begin{cases}
J(a)&\mbox{if }a<c;\\
J(b,c)&\mbox{if }a\ge c.
\end{cases}
\]
The subgroup~$K(L)$ approximates~$L$-periodic points in
the following sense.

\begin{lemma}\label{withinthesoundofsilence}
There is a constant~$\HH$, depending only on~$f$, with
\begin{equation}\label{inrestlessdreamsiwalkedalone}
\left\vert
\mathsf{m}(L^{\perp})-\mathsf{m}(K(L))
\right\vert\le\frac{\HH}{\max\{a,c\}}
\end{equation}
for any~$L=L(a,b,c)\in\mathcal L$.
\end{lemma}

\begin{proof}
For a point~${w}=(w_1,w_2,w_3,w_4)\in\mathbb T^4$,
let~$\Lambda(w)$ denote the line segment from~$(w_1,w_2)$
to~$(w_3,w_4)$, and let~$V(w)$ be the total variation of the
curve~$(s,t)\mapsto\lf(s,t)$ for~$(s,t)\in\Lambda(w)$. By the
hypothesis of expansiveness,~$V:\mathbb T^4\to\mathbb R$ is
continuous and hence bounded by some constant~$\alpha$. Thus
\begin{equation}\label{andinthenakedlightisaw}
\left\vert
\frac{1}{c}\sum_{k=0}^{c-1}
\lf(\textstyle\frac{j}{a},\frac{k}{c}-\frac{bj}{ac})
-
\displaystyle\int_0^1\lf(\textstyle\frac{j}{a},t)\dee t
\right\vert\le\frac{\alpha}{c}
\end{equation}
and
\begin{equation}\label{tenthousandpeoplemaybemore}
\left\vert
\frac{1}{a}\sum_{j=0}^{a-1}
\lf(\textstyle\frac{j}{a},\frac{k}{c}-\frac{bj}{ac})
-\displaystyle\int_0^1
\lf(t,\textstyle\frac{k}{c}-\frac{b}{c}t)\dee t
\right\vert\le\frac{\alpha}{a}.
\end{equation}
If~$a<c$, then
\[
\vert
\mathsf{m}(L^{\perp})-\mathsf{m}(K(L))\vert
\le
\frac{1}{a}
\sum_{j=0}^{a-1}\left\vert
\frac{1}{c}
\sum_{k=0}^{c-1}
\lf(\textstyle\frac{j}{a},
\frac{k}{c}-\frac{bj}{ca})
-
\displaystyle\int_{0}^{1}
\lf(\textstyle\frac{j}{a},t)\dee t
\right\vert\negmedspace,
\]
and so~\eqref{andinthenakedlightisaw}
gives~\eqref{inrestlessdreamsiwalkedalone}. If~$a\ge c$, then
\[
\vert
\mathsf{m}(L^{\perp})-\mathsf{m}(K(L))
\vert
\le
\frac{1}{c}
\sum_{k=0}^{c-1}\left\vert
\frac{1}{a}
\sum_{j=0}^{a-1}
\lf(\textstyle\frac{j}{a},
\frac{k}{c}-\frac{bj}{ca})
-\displaystyle\int_0^1
\lf(t,\textstyle\frac{k}{c}-\frac{b}{c}t)
\dee t\right\vert
\negmedspace\negthinspace,
\]
and in this case~\eqref{tenthousandpeoplemaybemore}
implies~\eqref{inrestlessdreamsiwalkedalone}.
\end{proof}

Write~$\mathcal L(n)$ for the set of subgroups of index~$n$,
and isolate the term corresponding to the largest subgroups
arising in~$\mathcal M(N)$ by writing
\[
\mertens_1(N)=\sum_{n\le N}\frac{1}{n}\sum_{L\in\mathcal L(n)}\frac{\fix(L)}{\eul^{gn}}
\]
and
\[
\mertens_2(N)=\sum_{n\le N}\frac{1}{n}\sum_{L\in\mathcal L(n)}\sum_{L'>L}
\frac{\mu(L',L)\fix(L')}{\eul^{gn}},
\]
so that~$\mertens(N)=\mertens_1(N)+\mertens_2(N)$.

Now fix a subgroup~$L\in\mathcal L(n)$ and assume
that~$L'=L(a,b,c)>L$. Then~$[L']=ac\le\frac{n}{2}$, so
either~$a\le\sqrt{n/2}$ or~$c\le\sqrt{n/2}$.
By~\eqref{inrestlessdreamsiwalkedalone},
\[
[L']\left(
\mathsf{m}(L'^{\perp})-\mathsf{m}(K(L'))\right)
\le\frac{\HH ac}{\max\{a,c\}}=\HH\min\{a,c\}\le\HH\sqrt{n}.
\]
It follows that
\begin{eqnarray*}
\log\fix(L')-gn&=&[L']\mathsf{m}(L'^{\perp})-gn\\
&=&[L']\mathsf{m}(K(L'))-{gn}+[L']\left(
\mathsf{m}(L'^{\perp})-\mathsf{m}(K(L'))
\right)\\
&\le&\displaystyle\frac{n}{2}\left(
\mathsf{m}(K(L'))-g
\right)+\HH\sqrt{n}-\frac{gn}{2}\\
&\le&\HH\sqrt{n}-\displaystyle\frac{gn}{2},
\end{eqnarray*}
since~$\mathsf{m}(K(L'))\le g$
by~\eqref{leftitsseedswhileIwassleeping}.
By~\cite[Lem.~2]{MR2465676} there is a constant~$\II$ with
\[
\vert\mu(L',L)\vert\le\eul^{\II(n/2)^2}
\]
(since~$[L']<n/2$); moreover
\[
a_n(\mathbb Z^2)\le9n\log n
\]
by~\cite[Lem.~3]{MR2465676}. Thus
\begin{eqnarray}
\vert\mertens_2(N)\vert&\le&
\sum_{n\le N}\displaystyle\frac{1}{n}\exp\left({\HH\sqrt{n}-gn/2}\sum_{L\in\mathcal{L}(n)}\right)
\sum_{L'>L}\vert\mu(L',L)\vert\nonumber\\
&\le&
\sum_{n\le N}\displaystyle\frac{9}{n}
\exp\left({\HH\sqrt{n}+\II(\log(n/2))^2-gn/2}\right)n\log n\nonumber\\
&=&\bigo(1).\nonumber\label{narrowstreetsofcobblestone}
\end{eqnarray}
It follows that the asymptotic growth is controlled by~$\mertens_1(N)$.
In order to isolate the subgroups responsible for the excess of
periodic orbits above the level predicted by the topological
entropy, let
\[
A=\{a\ge1\mid\mathsf{m}(J(a))=g\}
\]
and
\[
B=\{(b,c)\mid c\ge1,0\le b\le a-1,\mathsf{m}(J(b,c))=g\}.
\]
Partition the subgroups~$\mathcal{L}(n)$ into
\begin{eqnarray*}
\mathcal{L}_1(n)&=&\{L(a,b,c)\in\mathcal{L}(n)\mid a\in A,(b,c)\in B\},\\
\mathcal{L}_2(n)&=&\{L(a,b,c)\in\mathcal{L}(n)\mid a\in A,(b,c)\notin B\},\\
\mathcal{L}_3(n)&=&\{L(a,b,c)\in\mathcal{L}(n)\mid a\notin A,(b,c)\in B\},\mbox{ and}\\
\mathcal{L}_4(n)&=&\{L(a,b,c)\in\mathcal{L}(n)\mid a\notin A,(b,c)\notin B\}.
\end{eqnarray*}
The sumpremum in~\eqref{leftitsseedswhileIwassleeping} is attained,
so~$\mathcal{L}_2(n)\cup\mathcal{L}_3(n)\neq\emptyset$
for any~$n\ge1$.
This main term decomposes as
\begin{equation}\label{allthestreetsarecrammedwiththings}
\mertens_1(N)=\sum_{j=1}^{4}\underbrace{\sum_{n\le N}\displaystyle\frac{1}{n}
\sum_{L\in\mathcal{L}_j(n)}\exp(n(\mathsf{m}(L^{\perp})-g))}_{\submertens_j(N)}.
\end{equation}
Let~$\mathcal{K}=\{K(L)\mid
L\in\mathcal{L}\}\subset\mathcal{C}$, and
enumerate~$\mathcal{K}=\{K_1,K_2,\dots\}$. In the Hausdorff
metric,~$K_j\rightarrow\mathbb T^2$ as~$j\to\infty$,
so~$\mathsf{m}(K_j)\rightarrow h$ as~$j\to\infty$. Since we
have~$g>h$, it follows that
\begin{equation}\label{neaththehaloofastreetlamp}
\lambda=\inf\{\vert g-\mathsf{m}(K)\vert\mid K\in\mathcal{K},
\mathsf{m}(K)\neq g\}>0
\end{equation}
and
\[
\{J(a)\mid a\in A\}\cup\{ J(b,c)\mid(b,c)\in B\}=
\{K\in\mathcal{K}\mid\mathsf(K)=g\}
\]
must be finite. In particular, both~$A$ and~$B$ are finite,
so~$\mathcal{L}_1(n)\neq\emptyset$ for only finitely many
values of~$n\ge1$, and therefore~$\submertens_1(N)=\bigo(1)$.

If~$L=L(a,b,c)\in\mathcal{L}_4(N)$, then~$a\le\sqrt{n}$
or~$c\le\sqrt{n}$ since~$[L]=ac=n$.
By~\eqref{inrestlessdreamsiwalkedalone}
and~\eqref{neaththehaloofastreetlamp}, it follows that
\begin{eqnarray*}
n\left(
\mathsf{m}(L^{\perp})-g
\right)&=&
n\left(
\mathsf{m}(K(L))-g
\right)+
n\left(
\mathsf{m}(L^{\perp})-\mathsf{m}(K(L))\right)\\
&\le&-\lambda n+\HH\sqrt{n}
\end{eqnarray*}
and so
\begin{equation*}\label{iturnedmycollartothecoldanddamp}
\submertens_4(N)\le\sum_{n\le N}\displaystyle\frac{1}{n}
\vert\mathcal{L}_4(N)\vert\exp(-\lambda n+\HH\sqrt{n}).
\end{equation*}
By~\cite[Lem.~3]{MR2465676},
\[
\vert\mathcal{L}_4(n)\vert\le\vert\mathcal{L}(n)\vert\le 9n\log n,
\]
so~$\submertens_4(N)=\bigo(1).$

We are left with~$\submertens_2$ and~$\submertens_3$. Let
\[
B_a(x)=\left\{
(b,c)\in\mathbb Z^2\mid
\max_{a\in A}\{a\}<c\le\lfloor x\rfloor,
0\le b\le a-1,(b,c)\notin B
\right\}\negmedspace,
\]
so that $\submertens_2(N)=\Theta(N)+\bigo(1)$, where
\[
\Theta(N)=\sum_{a\in A}\sum_{(b,c)\in B_a(N/a)}\displaystyle\frac{1}{ac}
\exp\left(
ac(\mathsf{m}(L(a,b,c)^{\perp})-g
\right).
\]
If~$a\in A$,~$(b,c)\in B_a(N/a)$, and~$L=L(a,b,c)$,
then~$K(L)=J(a)$, so
\begin{eqnarray*}
\vert ac(\mathsf{m}(L^{\perp})-g)\vert&=&
\vert ac(\mathsf{m}(K(L))-g)+ac(\mathsf{m}(L^{\perp})-\mathsf{m}(K(L)))\vert\\
&=&\vert ac(\mathsf{m}(L^{\perp})-\mathsf{m}(K(L)))\vert\le\HH a
\end{eqnarray*}
by~\eqref{inrestlessdreamsiwalkedalone}. Thus
\[
\sum_{a\in A}\displaystyle\frac{1}{a}\exp(-\HH a)
\negmedspace\negmedspace\sum_{(b,c)\in B_a(N/a)}\negmedspace\negmedspace\frac{1}{c}
\le
\Theta(N)
\le
\sum_{a\in A}\displaystyle\frac{1}{a}\exp(\HH a)
\negmedspace\negmedspace\sum_{(b,c)\in B_a(N/a)}\negmedspace\negmedspace\frac{1}{c}.
\]
Now
\[
\sum_{(b,c)\in B_a(N/a)}\frac{1}{c}=a\log\lfloor N/a\rfloor+\bigo(1)=a\log N+\bigo(1),
\]
which when summed over the finitely many possible~$a$ gives the
contribution from~$\submertens_2(N)$.

Now let
\[
A(x)=\left\{
a\in\mathbb Z\mid\max_{(b,c)\in B}\{c\}<a\le\lfloor x\rfloor,
a\notin A
\right\},
\]
so that~$\submertens_3(N)=\Phi(N)+\bigo(1)$, where
\[
\Phi(N)=\sum_{(b,c)\in B}\sum_{a\in A(N/c)}\displaystyle\frac{1}{ac}
\exp(ac(\mathsf{m}(L(a,b,c)^{\perp})-g)).
\]
If~$(b,c)\in B$,~$a\in A(N/c)$, and~$L=L(a,b,c)$,
then~$K(L)=J(b,c)$, so~\eqref{inrestlessdreamsiwalkedalone}
says that~$\vert ac(\mathsf{m}(L^{\perp})-g)\vert\le\HH c$ and
hence
\[
\sum_{(b,c)\in B}\displaystyle\frac{1}{c}\exp(-\HH c)
\sum_{a\in A(N/c)}\frac{1}{a}
\le
\Phi(N)
\le
\sum_{(b,c)\in B}\displaystyle\frac{1}{c}\exp(\HH c)
\sum_{a\in A(N/c)}\frac{1}{a}.
\]
Once again the Euler formula for~$\sum_{a\in
A(N/c)}\frac{1}{a}$ gives the contribution
from~$\submertens_3(N)$.

Finally, we need to check that the constants associated
with~$\submertens_2(N)$ and~$\submertens_3(N)$ cannot both
vanish. This follows from the fact
that
\[
\mathcal{L}_2(n)\cup\mathcal{L}_3(n)\neq\emptyset,
\]
which in turn is a consequence of the fact that the supremum
in~\eqref{leftitsseedswhileIwassleeping} is attained
by~\cite{MR1411232}, completing the proof
of~\eqref{thatsplitthenight}.

Turning to~\eqref{andtouchedthesoundsofsilence}, we isolate a
dominant term as before,
\[
\pi(N)=\sum_{n\le N}\sum_{L\in\mathcal L(n)}\orbit(L)
=
\underbrace{\sum_{n\le N}\frac{1}{n}
\sum_{L\in\mathcal L(n)}\fix(L)}_{\pi_1(N)}
\]
\[
\qquad\qquad\qquad\qquad+ \underbrace{\sum_{n\le N}\frac{1}{n}\sum_{L\in\mathcal
L(n)}\sum_{L'>L}\mu(L',L)\fix(L')}_{\pi_2(N)}.
\]
Then (using estimates from~\cite[Lem.~2,3]{MR2465676} and
Lemma~\ref{withinthesoundofsilence} as before)
\begin{eqnarray*}
\frac{\pi_2(N)}{\eul^{gN}}
&\le&\sum_{n\le N}{\textstyle\frac{1}{n}}\exp(-gn)\sum_{L\in\mathcal L(n)}
\sum_{L'>L}\left(
\exp(\II(\log[L])^2)
\right)\fix(L')\\
&\le&
\sum_{n\le N}{\textstyle\frac{1}{n}}\sum_{L\in\mathcal L(n)}
\sum_{L'>L}
\exp\left(
\II(\log n)^2\vphantom{{\textstyle\frac{\sqrt{n}}{\sqrt{2}}}}\right.\\
&&\qquad\qquad\qquad\qquad\qquad+\left.{\textstyle\frac{n}{2}}\mathsf{m}(K(L'))+\HH
{\textstyle\frac{\sqrt{n}}{\sqrt{2}}}-gN
\right)\\
&\le&
\sum_{n\le N}{\textstyle\frac{1}{n}}\sum_{L\in\mathcal L(n)}
\sum_{L'>L}
\exp\left({\vphantom{\sum}}\right.
\II(\log n)^2+{\textstyle\frac{n}{2}}\underbrace{(\mathsf{m}(K(L'))-g)}_{\le0}\\
&&\qquad\qquad\qquad\qquad\qquad\qquad\qquad
+
\HH{\textstyle\frac{\sqrt{n}}{\sqrt{2}}}-g\underbrace{(N-{\textstyle\frac{n}{2}})}_{\ge N/2}
\left.{\vphantom{\sum}}\right)\\
&\le&
\sum_{n\le N}
{\textstyle\frac{9n^4\log n}{n}}\exp
\left(
\II(\log n)^2+\HH{\textstyle\frac{\sqrt{n}}{\sqrt{2}}}-g{\textstyle\frac{N}{2}}
\right)=\bigo(1).
\end{eqnarray*}
We decompose the main term~$\pi_1(N)$
as~$\sum_{j=1}^{4}\rho_j(N)$, corresponding to the
decomposition~$\mathcal L(n)=\mathcal{L}_1(n)\sqcup
\mathcal{L}_2(n)\sqcup\mathcal{L}_3(n)\sqcup\mathcal{L}_4(n)$
as before.

Since~$A$ and~$B$ are finite, it is easy to check
that~$\exp(-gN)\rho_1(N)\rightarrow0$
as~$N\to\infty$.

If~$L\in\mathcal L_4(n)$ then~$a\le\sqrt{n}$ or~$c\le\sqrt{n}$,
so
\begin{eqnarray*}
n\mathsf{m}(L^{\perp})-Ng&=&
n\underbrace{(\mathsf{m}(K(L))-g)}_{\le-\lambda}+
n\underbrace{(\mathsf{m}(L^{\perp})-\mathsf{m}(K(L)))}_{\le\HH\sqrt{n}}\\
&&\qquad\qquad\qquad\qquad-g(N-n),
\end{eqnarray*}
and therefore
\[
\frac{\rho_4(N)}{\eul^{gN}}
=
\sum_{n\le N}\frac{1}{n}\sum_{L\in\mathcal L_4(N)}\exp\left(
n\mathsf{m}(L^{\perp})-Ng
\right)\rightarrow0
\]
as~$N\to\infty$.

Now
\begin{eqnarray*}
\frac{\rho_2(N)}{\eul^{gN}}&=&\exp(-gN)
\sum
{\textstyle\frac{1}{ac}} \exp(
ac(\mathsf{m}(L(a,b,c)^{\perp})-g)
)\exp(acg)\\
&=&\overbrace{
\exp(-gN)
\sum 1}^{\Upsilon(N)}
+\medspace\littleo(1),
\end{eqnarray*}
where the sum runs over all positive integers $a,b,c$ such that $ac\le N$, $b\le a-1$, $a\in A$ and $(b,c)\notin B$. 
If~$a\in A$ and~$(b,c)\in B_a(N/a)$, then~$K(L(a,b,c))=J(a)$,
so~$\mathsf{m}(K(L))=g$ and
\[
\vert
ac
\(\mathsf{m}(L(a,b,c)^{\perp})-g\)
\vert\le\HH a.
\]
Thus~$\Upsilon(N)$ lies between
\[
\exp(-gN)\sum_{a\in A}\frac{\exp(-\HH a)}{a}
\sum_{(b,c)\in B_a(N/a)}\frac{1}{c}\exp(acg)
\]
and
\[
\exp(gN)\sum_{a\in A}\frac{\exp(-\HH a)}{a} \sum_{(b,c)\in
B_a(N/a)}\frac{1}{c}\exp(acg).
\]
A similar argument applies to~$\rho_3(N)$. This gives the
lower bound in~\eqref{andtouchedthesoundsofsilence} by
considering a single value of~$a$, and the upper bound by easy
estimates.
\end{proof}

\section{Examples}

Theorem~\ref{whenmyeyeswerestabbedbyaflashofneonlight} is a
weak result -- in that it does not give a single asymptotic --
and it only applies when~$g$ exceeds~$h$. This section 
provides exact asymptotics for examples in both the cases~$g>h$ and~$g=h$,
and shows that   
there are actions defined by non-constant polynomials that 
behave more like the~$\mathbb Z^2$-actions in~\cite{MR2465676}.
That is, there are examples beyond full-shifts for which~$\mertens(N)$ behaves like~$N$ rather than~$\log N$. 

\begin{example}
Let~$f(x,y)=2+xy^2$, so that~$h=\log2$
(see~\cite{MR1700272}; this and all subsequent integrations may be computed
using Jensen's formula). Moreover,
\begin{eqnarray*}
\mathsf{m}(J(a))&=&\frac{1}{a}\sum_{j=0}^{a-1}
\int_{0}^{1}\log\vert
2+\eul^{2\pi\imag j/a}\eul^{4\pi\imag t}
\vert\dee t\\
&=&\log2.
\end{eqnarray*}
We calculate~$\mathsf{m}(J(b,c))$
by exploiting the periodicity of~$(s,t)\mapsto\lf(s,t)$:
\[
\mathsf{m}(J(b,c))=\frac{1}{\gcd(b,c)}\sum_{\ell=0}^{\gcd(b,c)-1}
\int_0^1\log\vert
2+\xi_{\ell,c}\eul^{2\pi\imag(c-2b)t}
\vert\dee t,
\]
where~$\xi_{\ell,c}=\eul^{4\pi\imag\ell/c}$. If~$c\neq 2b$
then~$\mathsf{m}(J(b,c))=\log 2$. If~$c=2b$ then the
integrand is~$\log\vert2+\xi_{\ell,c}\vert$, so
\begin{eqnarray*}
\mathsf{m}(J(b,c))&=&\frac{1}{b}\sum_{\ell=0}^{b-1}
\log\vert2+\xi_{\ell,2b}\vert\\
&=&\frac{1}{b}\log\prod_{\ell=0}^{b-1}\vert2+\xi_{\ell,2b}\vert\\
&=&\frac{1}{b}\log(2^b-(-1)^b),
\end{eqnarray*}
which is~$\log3$ when~$c=2b=2$ and is strictly smaller
than~$\log3$ otherwise. Therefore,~$g=\log3$. Following the proof of 
Theorem~\ref{whenmyeyeswerestabbedbyaflashofneonlight}, the
significant contribution to~$\mertens_1(N)$ comes 
from~$\submertens_3(N)$ in~\eqref{allthestreetsarecrammedwiththings}.
Now
\begin{eqnarray*}
\fix(L(a,1,2))&=&\prod_{(s,t)\in L(a,1,2)^{\perp}}\vert
f(\eul^{2\pi\imag s},\eul^{2\pi\imag t})
\vert\\
&=&\prod_{j=0}^{a-1}\prod_{k=0}^{1}\vert
2+\eul^{2\pi\imag j/a}\eul^{4\pi\imag(k/2-j/2a)}
\vert=3^{2a}.
\end{eqnarray*}
Thus
\[
\mertens(N)=\submertens_3(N)+\bigo(1)=\textstyle\frac{1}{2}\log N+\bigo(1).
\]
\end{example}

\begin{example}
Let~$f(x,y)=3+x+y$, so that~$h=\log3$. From
Lind~\cite{MR1411232} we
have~$\mathsf{m}(J(0,1))=\log4$,~$\mathsf{m}(J(1))=\log4$,~$\mathsf{m}(K(L(a,b,c)))<\log4$
for~$(b,c)\neq(0,1)$, and~$\mathsf{m}(K(L(a,b,c)))<\log4$
for~$a\neq1$. Thus we must take both~$\submertens_2(N)$
and~$\submertens_3(N)$ into account. A calculation using
circulants shows that
\[
\fix(L(a,0,1))=4^a-(-1)^a
\]
and
\[
\fix(L(1,b,c))=4^c-(-1)^c,
\]
so
\[
\submertens_2(N)=\sum_{c=2}^{N}\textstyle\frac{1}{c}\left(1-(-4)^{-c}\right)=\log N+\bigo(1)
\]
and
\[
\submertens_3(N)=\sum_{a=2}^{N}\textstyle\frac{1}{a}\left(1-(-4)^{-a}\right)=\log N+\bigo(1).
\]
As in the proof of
Theorem~\ref{whenmyeyeswerestabbedbyaflashofneonlight}, all
other contributions are bounded, so
\[
\mertens(N)=2\log N+\bigo(1).
\]
\end{example}

\begin{example}
Consider the~$d$-dimensional full shift on~$b$ symbols,
which has~$h=g=\log b$ (for~$d=2$ this is the case corresponding to
the polynomial~$f=b$). Then the estimates
from~\cite{MR2465676} show that the growth in~$\mertens(N)$ is
determined by the main term
\[
\sum_{n\le N}\frac{1}{b^{n}}\frac{1}{n}\sum_{L\in\mathcal
L(n)}\fix(L),
\]
and~$\fix(L)=b^{[L]}$. Then
\begin{eqnarray*}
\sum_{n\le N}\frac{b^{-n}\frac{1}{n}a_n(\mathbb Z^d)\eul^{gn}}{n^z}
&=&\sum_{n\ge1}\frac{a_n(\mathbb Z^d)}{n^{z+1}}\\
&=&\zeta(z+1)\zeta(z)\cdots\zeta(z-d+2),
\end{eqnarray*}
so by Perron's theorem~\cite{MR0185094} we have
\begin{eqnarray*}
\mertens(N)&\sim&\residue_{z=d-1}\(
\textstyle\frac{\zeta(z+1)\cdots\zeta(z-d+2)N^z}{z}\)\\
&=&N^{d-1}
\frac{\pi^{\lfloor\frac{d}{2}\rfloor(\lfloor\frac{d}{2}\rfloor+1)}}{r_d}
\negmedspace\prod_{j=1}^{\lfloor(d-1)/2\rfloor}
\negmedspace\zeta(2j+1)
\end{eqnarray*}
for some~$r_d\in\mathbb Q$ ($r_d\in\mathbb N$ for~$d\le11$;
the numerator and denominator of~$r_d$ as~$d$ varies
are sequences~\href{http://www.research.att.com/~njas/sequences/A159283}{A159283}
and~\href{http://www.research.att.com/~njas/sequences/A159282}{A159282} in the
\href{http://www.research.att.com/~njas/sequences/}{On-line Encyclopedia of Integer Sequences}).
This gives the main term in the
dynamical Mertens' theorem for the full~$\mathbb Z^d$-shift
considered in~\cite{MR2465676} in closed form; the first few
expressions are shown in Table~\ref{table}.
\begin{table}[ht]
\begin{center}
\caption{\label{table}}
\begin{tabular}{l|c}
$d$&$\mertens(N)$ for the full $\mathbb Z^d$-shift\\
\hline
$1$&$\log N+\gamma$\\
$2$&$\textstyle\frac{\vphantom{A^A}1}{6}\pi^2N$\\
$3$&$\textstyle\frac{\vphantom{A^A}1}{12}\zeta(3)\pi^2N^2$\\
$4$&$\textstyle\frac{\vphantom{A^A}1}{1620}\zeta(3)\pi^6N^3$\\
$5$&$\textstyle\frac{\vphantom{A^A}1}{2160}\zeta(3)\zeta(5)\pi^6N^4$\\
$6$&$\textstyle\frac{\vphantom{A^A}1}{2551500}\zeta(3)\zeta(5)\pi^{12}N^5$\\
$7$&$\textstyle\frac{\vphantom{A^A}1}{3061800}\zeta(3)\zeta(5)\zeta(7)\pi^{12}N^6$\\
$8$&$\textstyle\frac{\vphantom{A^A}1}{33756345000}\zeta(3)\zeta(5)\zeta(7)\pi^{20}N^7$\\
\end{tabular}
\end{center}
\end{table}
The authors admit that this closed form was
overlooked in~\cite{MR2465676}.
\end{example}

\begin{example}
A simple example beyond the full shift but still with~$g=h$ is
given by~$f(x,y)=x-2$. Here~$h=\log2$,
\begin{eqnarray*}
\mathsf{m}(J(a))&=&\frac{1}{a}\sum_{j=0}^{a-1}
\int_0^1\log\vert\eul^{2\pi\imag j/a}-2\vert\dee t\\
&=&\frac{1}{a}\sum_{j=0}^{a-1}\log\vert
\eul^{2\pi\imag j/a}-2\vert\\
&=&\frac{1}{a}\log(2^a-1),
\end{eqnarray*}
and
\begin{eqnarray*}
\mathsf{m}(J(b,c))=\frac{1}{c}
\sum_{k=0}^{c-1}\int_0^1
\log\vert\eul^{2\pi\imag t}-2\vert\dee t=\log2,
\end{eqnarray*}
so~$g=\log 2$. Now
\[
\fix(L(a,b,c))=\prod_{j=0}^{a-1}\prod_{k=0}^{c-1}\vert
\eul^{2\pi\imag j/a}-2
\vert=(2^a-1)^c,
\]
so~$\eul^{-gac}\fix(L(a,b,c))\le1$ and
\begin{eqnarray*}
\mertens_1(N)&\le&
\sum_{\genfrac{}{}{0pt}{}{a,b,c\ge1,}{0\le b\le a-1; ac\le N}}\frac{1}{ac}\\
&=&
\sum_{c=1}^{N}\frac{1}{c}\displaystyle\sum_{a=1}^{\lfloor N/c\rfloor}1\\
&=&\sum_{c=1}^{N}{\frac{1}{c}}\left(N/c+\bigo(1)\right)\\
&\le&\JJ N
\end{eqnarray*}
for some constant~$\JJ>0$. On the other hand, if~$2^a\ge N$
then
\[
1-2^{-a}\ge1-1/N
\]
so
\[
\underbrace{\exp(-gac)}_{1/2^{ac}}\fix(L(a,b,c))=(1-2^{-a})^c\ge
(1-1/N)^N\ge\textstyle\frac{1}{4}
\]
for~$N\ge2$. It follows that
\begin{eqnarray*}
\mertens_1(N)&\ge&\frac{1}{4}\sum_{c=1}^{N}\frac{1}{c}\sum_{a=\lceil\log_2N\rceil}^{\lfloor N/c\rfloor}1\\
&\ge&\frac{1}{4}
\sum_{c=1}^{\lfloor N/2\log_2 N\rfloor}\frac{1}{c}\left(
\lfloor N/c\rfloor
-\lceil\log_2n\rceil
\right)\ge\KK N
\end{eqnarray*}
for some constant~$\KK>0$ and all sufficiently large~$N$. Thus
\[
0<\KK N\le\mertens(N)\le\LL N
\]
for all large~$N$.
\end{example}


\providecommand{\bysame}{\leavevmode\hbox to3em{\hrulefill}\thinspace}

\end{document}